\documentclass[10 pt]{amsart}
\usepackage{amsmath}
\usepackage{mathtools}
\usepackage{amssymb}
\usepackage{amsthm}
\usepackage{fancyhdr}
\usepackage[all]{xy}
\usepackage{hyperref}
\usepackage{paralist}
\usepackage{tikz}
\usepackage{algorithm}
\usepackage{algorithmic}
\usepackage{graphicx}

\pdfpagewidth 8.5in
\pdfpageheight 11in
\newtheorem{theorem}{Theorem}[section]
\newtheorem{lemma}[theorem]{Lemma}
\theoremstyle{proposition}

\theoremstyle{corollary}

\theoremstyle{definition}

\theoremstyle{remark}
\newtheorem{remark}[theorem]{Remark}

\numberwithin{equation}{section}

\newcommand{\F}{\mathbb F}

\begin{document}

\email{nari15@itu.edu.tr, ozdemiren@itu.edu.tr}
\title{Group Operation on Nodal Curves}
\author{Kubra Nari}
\author{Enver Ozdemir}
\address{Informatics Institute, Istanbul Technical University}

\begin{abstract}

 In this work, we present an efficient method for computing in the Generalized Jacobian of special singular curves. The efficiency of the operation is due to representation of an element in the Jacobian group by a single polynomial.

\end{abstract}
\keywords{ Jacobian group, nodal curves, Mumford representation, Cantor's Algorithm}
\thanks{Classification: 11G99, 11G20, 11Y99}
\maketitle

\specialsection*{\bf \Large Introduction}
\indent The Jacobian groups of smooth curves, especially for those belonging to the elliptic and hyperelliptic curves, have been rigorously investigated \cite{Anderson, Bauer,Cantor, Makdisi, Mumford} due to  their use in computational number theory and cryptography \cite{CohFrey, HLEN, KOBEL, KOBHYP, VMIL}. Even though the singular counterpart of these curves have simple geometric structures, the generalized Jacobian groups of these curves might be potential candidates for further applications in computational number theory or cryptography.

\indent An element in the Jacobian of a hyperelliptic curve is represented by a pair of polynomials $(u(x),v(x))$ satisfying certain conditions \cite{Mumford}. The situation is the same for higher degree curves. For example, an element $D$ in the Jacobian of a superelliptic curve $S:y^3=g(x)$ is represented by  a triple  of  polynomials $(s_1(x),s_2(x),s_3(x))$ satisfying certain conditions \cite{Bauer}. Therefore, for a smooth curve, we do not have the liberty to choose any polynomial $u(x)$ and say that it is a coordinate of an element in the Jacobian group of the curve.  On the other hand, the results of this work will allow us to treat almost any polynomial $h(x)$ as an element of the Jacobian of a nodal curve. We believe that this might encourage researchers to work with these curves for further applications in the related areas. \\
\indent In this work, we present an efficient method to perform group operation on the Jacobians of nodal curves. The method is basically a modification of Mumford representation\cite{Mumford} and Cantor's algorithm\cite{Cantor}. We note  that in the work \cite{OzdemirPHD}, Mumford representation and Cantor's algorithm are extended for  general singular curves. For our purpose, a nodal curve $N$ over a finite field $\F_q$ with a characteristic $p\ne 2$ is a curve defined by an equation $y^2=xf(x)^2$ where $f(x)\in \F_q[x]$ is an irreducible polynomial. Let $d=\deg(f(x))$. We  show that almost any polynomial $h(x)$ with $\deg(h(x))<d$ uniquely represents an element $D$ in the Jacobian of the curve. Then, we define an addition algorithm for this  single polynomial representation in the Jacobian group. The representation provides great advantageous and the implementation results are illustrated by the end of the paper.

\section{Singular  Curves}
\indent  The Jacobian is an abstract term which attaches an abelian group to an algebraic curve. This abstract group, Jacobian,  is simply the ideal class  group of the corresponding coordinate ring.  If the 
curve is smooth, the attached group is called  Jacobian, otherwise it is called  Generalized Jacobian \cite{Rosenlicht}. However, we will keep using the term `Jacobian' for all kinds of curves. We are only interested in computing in the Jacobian groups of nodal curves and more details about algebraic and geometric properties of these curves can be found in \cite{Bosch,LIU}. As we mentioned above, for our purposes, a nodal curve is defined by an equation of the form $N:y^2=xf(x)^2$ over a field $\mathbb F_q$ with a characteristic different from 2 where $f(x)$ is an irreducible polynomial in $\mathbb F_q[x]$. The attached  Jacobian group is denoted by Jac($N$). For example, if the degree of $f(x)$ is 1, that is $N:y^2=x(x+a)^2$ for some $a\ne 0\in \F_q$,  computing in the Jacobian group is similar to computing in an elliptic curve group \cite[Section 2.10]{WAS}. In order to perform group operation for a curve, each element in the Jacobian should be represented in a concrete way. The Mumford representation provides a concrete representation for elements in the Jacobians of hyperelliptic curves. This representation has been extended\cite{OzdemirPHD} for singular curves defined by equations of the form $y^2=g(x)$. Below, we present Mumford representation along with Cantor's algorithm which provides a method of computing in the Jacobians for aforementioned singular curves \cite{OzdemirPHD}.
\subsubsection{The Mumford Representation}
\indent Let $f(x) \in \mathbb F_q$ be a monic  polynomial of degree $2g+1$  such that $g\ge 1$. A curve $H$ over $\F_q$ is defined by  the equation $y^2=f(x)$.  Any divisor class $D$ in the Jacobian group of $H$, Jac($H$), is represented by a pair of polynomials $[u(x),v(x)]$ satisfying the following:

\begin{enumerate}
\item $\deg(v(x))<\deg(u(x))$.
\item $v(x)^2-f(x)$ is divisible by $u(x)$. 
\item If $u(x)$ and $v(x)$ are both multiples of $(x-a)$ for a singular point $(a,0)$ then
$\dfrac{f(x)-v(x)^2}{u(x)}$ is not a multiple of $(x-a)$. Note that $(a,0)$ is a singular point of $H$ if $a$ is a multiple root of $f(x)$.\\
\end{enumerate}

Any divisor class $D\in$ Jac($H$) is uniquely represented by a (reduced) pair $(u(x),v(x))$ if in addition to the above properties, we have:
\begin{enumerate}
\item $u(x)$ is monic.
\item deg$(v(x))<$deg$(u(x))\leq g.$
\end{enumerate}
 We should note here that the identity element is represented by $[1,0]$.
\subsubsection{Cantor's Algorithm} 
This algorithm takes two divisor classes $D_1=[u_1(x),v_1(x)]$ and $D_2=[u_2(x),v_2(x)]$ on $H: y^2=f(x)$ and outputs the unique representative for the divisor class $D$ such that $D=D_1+D_2$. 
\begin {enumerate}
\item $h= \gcd (u_1,u_2,v_1+v_2)$ with polynomials $h_1, h_2,h_3$ such that\\ $h=h_1u_1+h_2u_2+h_3(v_1+v_2)$\\
\item $u=\dfrac {u_1u_2}{h^2}$ and $v\equiv \dfrac{h_1u_1v_2+h_2u_2v_1+h_3(v_1v_2+f)}{h}$ (mod $u$)\\
\\
{\bf repeat:}
\\
\item $\widetilde{u}=\dfrac{v^2-f}{u}$ and $\widetilde{v}\equiv v$ (mod $\widetilde u$)\\

\item $u=\widetilde u$ and $v=-\widetilde v$
\\
{\bf until} deg $(u)\leq g$ 

\item Multiply $u$ by a constant to make $u$ monic.
\item $D=[u(x),v(x)]$
\end{enumerate}
The combination of the third and fourth steps is called the reduction steps which eventually return a unique reduced divisor for each class. The justification of the above statements is given in the \cite{OzdemirPHD}.\\

\section{Nodal Curves}
A nodal curve  $N$ over a field is an algebraic curve with finitely many singular points which are all simple double points. The curve $N$ has a smooth resolution $\widetilde{N}$ obtained by separating the two branches at each node. In this section, we are  going to construct a  representation for elements in the Jacobians of  nodal curves. Again, note that the curves under consideration are of the form $N:y^2=xf(x)^2$ where $f(x)$ is an irreducible polynomial of degree $d$ over the field $\mathbb F_q$. Here, we briefly mention related results for nodal curves especially from the work of M. Rosenlicht \cite{Rosenlicht, Rosenlicht54}. Let 
$C$ be a smooth algebraic curve and $\mathfrak m$ be a modulus, i.e. $\mathfrak m=\sum_{P\in C} m_PP$ where $m_P$ is non-negative. Let denote the generalized Jacobian group of $C$ with respect to the modulus $\mathfrak m$ by $J_{\mathfrak m}(C)$. We have a surjective homomorphism\cite{Rosenlicht54} 
$$\sigma : J_{\mathfrak m}(C)\rightarrow Jac(C).$$  
\begin{remark}\label{Rm1} The normalization of the nodal curve $N$ gives $\mathbb P^1$ so we take $C=\mathbb P^1$. It is known that Jac($C)$ is trivial. In our case i.e., $J_{\mathfrak m}(C)=$Jac($N)$, the kernel of $\sigma$ is isomorphic to a torus $\mathbb G_m^d$ of dimension $d$=deg($f(x)$). Note that, the modulus $\mathfrak m$ has only singular points which are the roots of $f(x)$. See  \cite{Dechene,Serre} for more details. 
\end{remark}	

\begin{theorem} \label{theorem1}
Let $f(x)$ be an irreducible polynomial of degree $d$ over $\F_q$ and $N:y^2=xf^2(x)$ be a nodal curve over $\mathbb F_q$. Any divisor class $D\in$ Jac($N$) is uniquely represented by a  polynomial $h(x)$ satisfying $$\deg(h(x))< d \text{ and }  \gcd(f(x),x-h^2(x))=1.$$  
\end{theorem}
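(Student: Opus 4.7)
My plan is to establish a bijection between the set
\[
\mathcal{H} = \{h(x) \in \F_q[x] : \deg h < d \text{ and } \gcd(f, x - h^2) = 1\}
\]
and Jac($N$), using the extended Mumford--Cantor framework for singular curves recalled above. The correspondence sends each $h \in \mathcal{H}$ to the Mumford pair $[u_h(x), v_h(x)]$ where $u_h(x) = x - h(x)^2$ and $v_h(x)$ is the reduction of $h(x) f(x)$ modulo $u_h(x)$.

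To verify this is well defined, I would check the Mumford conditions. The relation $u_h \mid v_h^2 - x f^2$ is immediate from the congruence $x \equiv h^2 \pmod{u_h}$, which gives $v_h^2 \equiv h^2 f^2 \equiv x f^2 \pmod{u_h}$. The third (singular-point) Mumford condition is vacuously satisfied: the singular points of $N$ are the pairs $(\alpha, 0)$ with $f(\alpha) = 0$, and the gcd hypothesis forces $u_h(\alpha) = \alpha - h(\alpha)^2 \neq 0$, so $u_h$ never vanishes at a singular point.

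For surjectivity, given a class $D \in$ Jac($N$) with reduced Mumford representative $[U(x), V(x)]$, I would first use the singular-point condition to arrange $\gcd(U, f) = 1$, then define $h(x) \equiv V(x) f(x)^{-1} \pmod{U(x)}$, noting that $\deg h < \deg U \le d$. The relation $U \mid V^2 - x f^2$ combined with $V \equiv h f \pmod U$ yields $U \mid f^2(h^2 - x)$; coprimality then forces $U \mid x - h^2$, and a short Cantor composition argument shows that $[U, V]$ and $[u_h, v_h]$ lie in the same class. Uniqueness follows because $u_h = x - h^2$ determines $h$ up to sign, while the sign is pinned down by the congruence $v_h \equiv h f \pmod{u_h}$.

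The main obstacle lies in the surjectivity step: one must carefully verify that the singular-point condition can always be applied to arrange $\gcd(U, f) = 1$, and that the resulting representative $[x - h^2, \cdot]$---whose $u$-degree can exceed the reduced Mumford bound $d$, reaching up to $2(d-1)$---is genuinely equivalent to $[U, V]$ in Jac($N$) via Cantor composition with a complementary pair supported on $(x - h^2)/U$. A counting comparison between the cardinality of $\mathcal{H}$ and the order of Jac($N$)($\F_q$) predicted by the torus description in Remark~\ref{Rm1} provides a useful sanity check on the bijection.
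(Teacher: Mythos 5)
Your forward map is sound: the pair $[\,x-h^2,\; hf \bmod (x-h^2)\,]$ does satisfy the three Mumford conditions, and it is in fact exactly what one reduction step of Cantor's algorithm produces from the paper's representative $[f^2(x),h(x)f(x)]$ of Lemma \ref{Lm1}, up to the sign $v\mapsto -v$; so the two constructions carry the same information. The problems are in uniqueness and, more seriously, surjectivity. For uniqueness you argue that $u_h=x-h^2$ determines $h$ up to sign and that $v_h$ pins the sign; this shows the \emph{pairs} are distinct, not the \emph{classes}. Since $\deg(x-h^2)$ can reach $2(d-1)>d=g$, these pairs are not reduced once $d\ge 3$, and distinct non-reduced pairs can represent the same divisor class. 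The paper instead gets injectivity from the group law: if two such classes coincide, the explicit addition formula of Lemma \ref{Lm2} forces $h_1-h_2$ to be zero or a multiple of $f$, and the degree bound excludes the latter.

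The surjectivity step is where the proposal genuinely fails. Given a reduced $[U,V]$ with $\gcd(U,f)=1$ and $h\equiv Vf^{-1}\pmod U$, you do get $U\mid x-h^2$, but the relation between the two classes is
\[
[\,x-h^2,\; hf \bmod (x-h^2)\,] \;=\; [U,V] \;+\; \Bigl[\,W,\; hf \bmod W\,\Bigr], \qquad W=\frac{x-h^2}{U},
\]
and the ``complementary pair'' $[W,\cdot]$ is precisely the obstruction, not the remedy: already for $\deg U=3$ (so generically $\deg h=2$ and $\deg(x-h^2)=4$) it is a nontrivial degree-one class, hence $[u_h,v_h]\neq[U,V]$. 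In other words $h\equiv Vf^{-1}\pmod U$ is \emph{not} an inverse of your forward map, and no short Cantor composition closes the gap. The paper never inverts the map; it counts. There are $q^d$ or $q^d-2$ admissible $h$ (according as $x$ is a nonsquare or a square in $\F_q[x]/(f)$), which is matched against the order $\approx q^d$ of the torus $\mathbb G_m^d$ of Remark \ref{Rm1}, so that injectivity yields surjectivity. The counting you relegate to a ``sanity check'' is therefore the only surjectivity argument actually available, and it must be carried out precisely. Doing so also exposes a point both you and the paper gloss over: the number of admissible $h$ equals $|\mathrm{Jac}(N)|-1$ exactly, the identity $[1,0]$ being the one class not of this form --- which is why Algorithm \ref{alg1} must treat it as a separate case.
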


We are going to prove Theorem \ref{theorem1} by a series of lemma.

\begin{lemma} \label{Lm1}
Let $N$ be as above. Let $h(x)$ be a polynomial of degree less than $d$ such that $\gcd(f(x),x-h^2(x))=1$. Then, the pair $D=[f^2(x),h(x)f(x)]$ represents an element in Jac($D$).
\end{lemma}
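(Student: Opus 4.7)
The plan is to verify the three defining conditions of the extended Mumford representation (as described in Section 1 for the curve $y^2 = g(x)$, specialized here to $g(x) = xf^2(x)$) for the pair $u(x) = f(x)^2$, $v(x) = h(x)f(x)$. Each condition should reduce to an elementary polynomial identity together with the hypothesis $\gcd(f, x - h^2) = 1$.

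First I would dispatch the two easy conditions. The degree condition is immediate: since $\deg h < d = \deg f$, we have $\deg(hf) = \deg h + d < 2d = \deg(f^2)$. The divisibility condition rests on the identity
\[
v(x)^2 - xf(x)^2 \;=\; h(x)^2 f(x)^2 - x f(x)^2 \;=\; f(x)^2\bigl(h(x)^2 - x\bigr),
\]
which is manifestly divisible by $u = f^2$, and moreover yields the explicit quotient $(xf^2 - v^2)/u = x - h(x)^2$ that will be needed for the singular-point test.

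The substantive step is the third condition, at the singular points. I would first identify the singular locus of $N$: computing $\partial/\partial y$ and $\partial/\partial x$ of $y^2 - xf^2$ shows that $(a,0)$ is singular exactly when $f(a) = 0$ (the hypothesis $\gcd(f, x-h^2)=1$ forces $f(0)\ne 0$, so $a\ne 0$ automatically, confirming these are nodes rather than the cusp at the origin). Let $\alpha_1,\dots,\alpha_d$ be the roots of $f$ in $\overline{\mathbb F_q}$; these are distinct since $f$ is irreducible and hence separable. At each $\alpha_i$ both $u$ and $v$ vanish, so the common linear factor $(x-\alpha_i)$ is present, and the condition to check is that $(xf^2 - v^2)/u = x - h(x)^2$ does \emph{not} vanish at $\alpha_i$. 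But $\alpha_i - h(\alpha_i)^2 \ne 0$ for every $i$ is equivalent, in $\mathbb F_q[x]$, to $\gcd\bigl(f(x), x - h(x)^2\bigr) = 1$, which is precisely our hypothesis.

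There is no real obstacle here; the argument is essentially a bookkeeping verification. The only subtlety worth noting is that the roots of $f$ live in an extension field, so the singular-point condition a priori involves the $d$ conjugate nodes simultaneously — the point of formulating the hypothesis as a single $\gcd$ condition over $\mathbb F_q[x]$ is that it packages all $d$ non-vanishing conditions into one irreducible-friendly statement that can be checked without passing to the extension.
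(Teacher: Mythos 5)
Your proof is correct and follows essentially the same route as the paper's: both verify the extended Mumford conditions for $[f^2, hf]$, with the key observation that $(xf^2 - v^2)/u = x - h^2$ cannot vanish at any root of $f$ precisely because $\gcd(f, x-h^2)=1$ (the paper simply states this more tersely, omitting the explicit check of the degree condition). One inessential slip: the hypothesis $\gcd(f, x-h^2)=1$ does \emph{not} force $f(0)\ne 0$ (e.g.\ $f(x)=x$ with $h$ a nonzero constant satisfies it), but this parenthetical plays no role in the actual verification.
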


\begin{proof}
Let $$D=[u(x),v(x)]=[f^2(x),h(x)f(x)].$$ Both $u(x)$ and $v(x)$ are divisible by $x-a$ where $a$ is any root of $f(x)$ over the algebraic closure of $\mathbb F_q$. On the other hand, $\gcd(f(x),x-h^2(x))=1$ so $$\dfrac{xf^2(x)-v^2(x)}{u(x)}=\dfrac{xf^2(x)-h^2(x)f^2(x)}{f^2(x)}=x-h^2(x)$$ is not divisible by $x-a$ for any root $a$ of $f(x)$. By Mumford Representation which is described above, $[f^2(x),h(x)f(x)]$ represents an element $D$ in Jac($N$).
\end{proof}

\begin{lemma}\label{Lm2}
Let $\gcd(f(x),x-h_i^2(x))=1$ and $\deg(h_i(x))<d$ for each $i=1,2$. Let $D_1=[f^2(x),h_1(x)f(x)]$ and $D_2=[f(x)^2,h_2(x)f(x)]$ be two divisor classes.
We find $$D_1+D_2=D_3=[f^2(x),h_3(x)f(x)]$$ via
\begin{enumerate}
\item finding two polynomials $g_1(x),g_2(x)$ such that $$g_1(x)f(x)+g_2(x)(h_1(x)+h_2(x))=1\\$$
\item Then computing $$h_3(x)\equiv (f(x)h_1(x)g_1(x)+g_2(x)(h_1(x)h_2(x)+x)) \mod f(x)$$ with $\deg(h_3(x))<d$.\\
\end{enumerate} 
\end{lemma}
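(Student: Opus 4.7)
The plan is to run Cantor's composition formulas on the two representatives $[f^2, h_1 f]$ and $[f^2, h_2 f]$ supplied by Lemma \ref{Lm1} and to observe that the resulting pair already sits in the family $[f^2, h f]$. The only nontrivial point will then be to check that the new $h_3$ again satisfies the coprimality hypothesis of Lemma \ref{Lm1}, so that we really have a valid Mumford representative of $D_1+D_2$.

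First I would execute Cantor's step (1) with $u_1=u_2=f^2$ and $v_1+v_2=(h_1+h_2)f$. The existence of a Bezout relation $g_1 f + g_2(h_1+h_2)=1$ assumed in the statement is precisely the condition $\gcd(f, h_1+h_2)=1$; multiplying it by $f$ gives $g_1 u_1 + 0\cdot u_2 + g_2(v_1+v_2)=f$, so Cantor returns $h=f$ with coefficients $(g_1,0,g_2)$. Step (2) then produces $u=u_1 u_2/h^2 = f^2$ and, after plugging in and dividing by $h=f$,
\[
v \;=\; g_1 h_2 f^{2} + g_2 f(h_1 h_2 + x).
\]
The first summand is divisible by $u=f^2$, so $v\equiv g_2 f(h_1 h_2+x)\pmod{u}$, which factors as $h_3 f$ with $h_3 \equiv g_2(h_1h_2+x)\pmod f$; this agrees with the stated formula since the additional $f h_1 g_1$ term there vanishes modulo $f$. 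Reducing modulo $f$ forces $\deg h_3<d$.

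The crux is to verify $\gcd(f, x-h_3^2)=1$, which is what allows Lemma \ref{Lm1} to recognise $[f^2, h_3 f]$ as a Mumford representative. From $g_2(h_1+h_2)\equiv 1\pmod f$ we have $h_3\equiv (h_1 h_2+x)/(h_1+h_2)\pmod f$, and the decisive algebraic identity
\[
(h_1 h_2 + x)^{2} - x(h_1+h_2)^{2} \;=\; (h_1^{2}-x)(h_2^{2}-x)
\]
yields $(h_1+h_2)^{2}(h_3^{2}-x)\equiv (h_1^{2}-x)(h_2^{2}-x)\pmod f$. By hypothesis $f$ is irreducible and coprime to each of $h_1+h_2$, $h_1^{2}-x$, and $h_2^{2}-x$, so it is coprime to $h_3^{2}-x$.

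I expect the only non-routine step to be locating and confirming that factorisation identity; everything else is a direct unrolling of Cantor's algorithm. Once it is in hand, Lemma \ref{Lm1} certifies $[f^2, h_3 f]$ as a valid representative of an element of Jac($N$), and Cantor's correctness identifies the class it represents with $D_1+D_2=D_3$, completing the argument.
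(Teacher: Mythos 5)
Your proposal is correct and follows essentially the same route as the paper: unroll Cantor's composition step with $u_1=u_2=f^2$, extract $h=f$ from the Bezout relation $g_1f+g_2(h_1+h_2)=1$, and read off $u=f^2$ and $v\equiv h_3(x)f(x) \pmod{f^2}$ with $h_3\equiv g_2(h_1h_2+x)\pmod{f}$. Your closing verification that $\gcd(f,x-h_3^2)=1$ via the identity $(h_1h_2+x)^2-x(h_1+h_2)^2=(h_1^2-x)(h_2^2-x)$ is a genuine addition --- the paper never checks that $h_3$ again satisfies the hypothesis of Lemma~\ref{Lm1}, so the closure of the representation under addition is left unjustified there --- and is worth keeping.
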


\begin{proof}
We apply Cantor's Algorithm for $D_1+D_2$ to confirm the addition algorithm. 
\begin{enumerate}
\item We first compute:\\

$\begin{array}{lll}
       \gcd(f(x)^2,f(x)^2,h_1(x)f(x)+h_2(x)f(x))&=& f(x)\cdot \gcd(f(x),f(x),h_1(x)+h_2(x))\\
       \\
       &=& f(x)\cdot \gcd(f(x),h_1(x)+h_2(x))\\
       \\
       &=&f(x)
      \end{array}$
 
with $g_1(x),g_2(x)$ such that $g_1(x)f(x)+g_2(x)\Big(h_1(x)+h_2(x)\Big)=1.$  \\

\item Set\\
$\begin{array}{lllll}
u_3(x)&=&\dfrac{f(x)^2f(x)^2}{f(x)^2}=f(x)^2 \\
\\
v_0(x)&=&\dfrac{g_1(x)f^3(x)h_1(x)+g_2(x)\Big(h_1(x)h_2(x)f^2(x)+xf(x)^2\Big)}{f(x)}\\
\\
&&=g_1(x)f^2(x)h_1(x)+g_2(x)\Big(h_1(x)h_2(x)f(x)+xf(x)\Big).\\
\\
\end{array}$
\item Then\\
$\begin{array}{lllll}
v_3(x)&\equiv& v_0(x) \mod u_1(x)\\
\\
&\equiv& g_1(x)f(x)^2h_1(x)+g_2(x)\Big(h_1(x)h_2(x)f(x)+xf(x)\Big)\mod u_3(x)=f(x)^2\\
\\
&=&\underbrace{\Big(f(x)g_1(x)h_1(x)+g_2(x)(h_1(x)h_2(x)+x) \mod f(x)\Big)}_{h_3(x)}f(x)\\
\\
&=& h_3(x)f(x) \text{ with} \deg(h_3(x))<d\\
\\
\end{array}
$
\item $D_1+D_2=[u_3(x),v_3(x)]=[f^2(x),h_3(x)f(x)]=D_3$\\
\end{enumerate}

\end{proof}

Note that $$[f^2(x), h(x)f(x)]+[f^2(x),-h(x)f(x)]=[1,0].$$

\begin{lemma}
Let $N:y^2=xf^2(x)$ be a nodal curve over $\mathbb F_q$ such that $f(x)$ is an irreducible polynomial. Let $$\begin{array}{ccc}
D_1 & = & [f^2(x),\quad h_1(x)f(x)] \text{ with } \deg (h_1(x))<\deg(f(x))\\
D_2 &= & [f^2(x),\quad h_2(x)f(x)] \text{ with } \deg(h_2(x))<\deg(f(x))
\end{array}$$ 
such that $$h_1(x)\ne h_2(x).$$
Then $$D_1\neq D_2.$$ 
\end{lemma}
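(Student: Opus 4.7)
My plan is to argue by contradiction. Suppose $D_1 = D_2$ in $\operatorname{Jac}(N)$, so that $D_1 - D_2 = \operatorname{div}(\psi)$ for some rational function $\psi$ on $N$. I will pull this back to the smooth normalization $\pi\colon \mathbb P^1 \to N$ given by $t \mapsto (t^2, tf(t^2))$, under which each node $(a_j,0)$ (for $a_j$ a root of $f$ in $\overline{\mathbb F_q}$) has two preimages $a_j^{\pm} := \pm\sqrt{a_j}$.

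The key geometric input is that the Mumford pair $D_i = [f^2, h_i f]$ represents a Cartier divisor on $N$ whose local equation near each node is $y - h_i f$, and whose pullback $\pi^* D_i$ to $\mathbb P^1$ is the \emph{same} for both $i = 1,2$: it equals $\sum_j([a_j^+] + [a_j^-])$ on the affine part. Indeed, the hypothesis $\gcd(f, x - h_i^2) = 1$ is exactly what forces $y - h_i f$ to vanish to order one on each branch at the node, so no $h_i$-dependent information survives pulling back. Consequently $\pi^*\psi$ has trivial divisor on $\mathbb P^1$ and must be a nonzero constant, so $\psi$ itself is a constant on $N$.

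Now since $\psi$ is constant while the local equation of $D_1 - D_2$ at $(a_j,0)$ is $\rho_j := (y - h_1 f)/(y - h_2 f)$, the two can agree (up to units of $\mathcal O_{N,(a_j,0)}$) only if $\rho_j$ itself descends to a regular, nonvanishing function on $N$ near the node; equivalently, the two branch-values of $\rho_j$ coincide:
\[
\frac{\sqrt{a_j} - h_1(a_j)}{\sqrt{a_j} - h_2(a_j)} \;=\; \frac{-\sqrt{a_j} - h_1(a_j)}{-\sqrt{a_j} - h_2(a_j)}.
\]
Cross-multiplying and using $\operatorname{char}\ne 2$ together with $a_j \ne 0$ (which holds because $f(0) \ne 0$; otherwise $(0,0)$ would be a cusp rather than a node) collapses this to $h_1(a_j) = h_2(a_j)$ for every root $a_j$ of $f$. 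Since $h_1 - h_2$ has degree $<d$ yet vanishes at all $d$ distinct roots of $f$ in $\overline{\mathbb F_q}$, it must vanish identically, contradicting $h_1 \ne h_2$.

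The main obstacle is the geometric dictionary in the second paragraph: verifying carefully that $[f^2, h_i f]$ corresponds to the Cartier divisor locally cut out by $y - h_i f$, and that the gcd condition makes it vanish to order exactly one on each branch so that the pullback to the normalization loses all dependence on $h_i$. Once this is pinned down, the rest is a one-line cross-multiplication and a polynomial interpolation count.
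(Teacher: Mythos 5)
Your argument is correct, but it takes a genuinely different route from the paper's. The paper's proof is purely algebraic: it forms $D_1+(-D_2)$ using the addition formula of Lemma \ref{Lm2} and observes that the output can be the identity only when the gcd in the first step of Cantor's algorithm is $f^2$ rather than $f$, i.e.\ only when $f\mid h_1-h_2$, which the degree bounds forbid unless $h_1=h_2$. You instead work directly with the definition of the generalized Jacobian, pulling back along the normalization $\pi:\mathbb P^1\to N$. Your ``geometric dictionary'' does check out: since $(y-h_if)(y+h_if)=(x-h_i^2)f^2$ and $x-h_i^2$ is a unit at each node by the gcd hypothesis, the ideal $(f^2,\,y-h_if)$ is locally principal there with generator $y-h_if$, and its pullback ideal on $\mathbb P^1$ is $(f(t^2))$, independent of $i$; hence $\pi^*\psi$ is constant, and the rest of your computation (equality of branch values forcing $2\sqrt{a_j}\,\bigl(h_1(a_j)-h_2(a_j)\bigr)=0$) is right, using $\operatorname{char}\ne 2$, $a_j\ne 0$, and the separability of the irreducible $f$ to get $d$ distinct roots. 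What your route buys: it is independent of the addition formula --- the paper's proof tacitly needs to know that the nontrivial output $[f^2,h_3f]$ of Cantor's algorithm is not the identity class, which is close to the statement being proved --- and it makes the torus $\mathbb G_m^d$ of Remark \ref{Rm1} completely explicit, the class of $[f^2,hf]$ being the tuple of branch-value ratios $\bigl(\sqrt{a_j}-h(a_j)\bigr)/\bigl(-\sqrt{a_j}-h(a_j)\bigr)$. What it costs is the machinery of Cartier divisors and local rings at nodes, which the paper deliberately avoids. Two small points you should make explicit: the hypotheses $\gcd(f,x-h_i^2)=1$ that you invoke are not stated in this lemma but are inherited from Lemma \ref{Lm1} (they are what makes $D_1,D_2$ legitimate representatives), and they are also exactly what guarantees that the denominators $\pm\sqrt{a_j}-h_i(a_j)$ in your ratios are nonzero.
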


\begin{proof}
Suppose $$D_1=D_2$$ then 
$$\begin{array}{ccl}
[1,0]&=& D_1+(-D_2)\\
&=& [f^2(x),h_1(x)f(x)]+[f^2(x),-h_2(x)f(x)]\\
\end{array}$$
This is possible only when $h_1(x)+(-h_2(x))$ is zero or a multiple of $f(x)$. Note that it can not be a multiple of $f(x)$ as the degrees of both $h_1(x)$ and $h_2(x)$ are less than $\deg(f(x))$. Therefore, as long as  $h_1(x)\ne h_2(x)$, we do not get $D_1=D_2$.
\end{proof}

\noindent{\it Proof of Theorem \ref{theorem1}}:\\
In Lemma \ref{Lm1}, we defined a new type of a representation for  elements in the Jacobian group of $N:y^2=xf^2(x)$, i.e., each element is represented by a pair $[f^2(x),h(x)f(x)]$ such that $\deg(h(x))<\deg(f(x))$ and $f(x)$ doesn't divide $x-h^2(x)$. The lemma \ref{Lm2} shows how to perform the group operation with this representation. In the last lemma, we showed that for distinct $h(x)$, the pairs represent distinct elements in the Jacobian group. As the degree of $h(x)$ is less than $d$, we have approximately $q^{\deg(f(x))}$ such pairs which is equal to the order of the Jacobian group by the remark \ref{Rm1} and this completes the proof.  \\

 Let $\mathbb F_q$ be a finite field with a characteristic $p\ne2$. Let $N:y^2=xf^2(x)$ be a singular curve such that $f(x)$ is an irreducible polynomial of degree $d$ over $\mathbb F_q$. The above discussion leads us to the following algorithm.
 
\begin{algorithm}[!h]                  
	\caption{Addition algorithm for the Jacobian groups of the curves $N:y^2=xf^2(x)$ over $\mathbb F_q$. }           
	\label{alg1}                     
	\begin{algorithmic}[1]   
		\renewcommand{\algorithmicrequire}{\textbf{Input:}}
		\renewcommand{\algorithmicensure}{\textbf{Output:}}
		\REQUIRE $D_1=h_1(x) \hspace{1ex} \text{and} \hspace{1ex} D_2=h_2(x)$ where the degrees of $h_1(x)$ and $h_2(x)$ are less than the degree of $f(x)$. Note that, we represent the pair $[f(x)^2,h(x)f(x)]$ by $h(x)$.
		\ENSURE $D= D_1 + D_2=h(x)$
		\STATE If $h_1(x)+h_2(x)=0$ set $h(x)=[1,0]$ (identity). Otherwise do:
		\STATE Find $g_1(x)$ and $g_2(x)$ such that
		$g_1(x)f(x)+g_2(x)(h_1(x)+h_2(x))=1$.
		\STATE Set: $$h(x)\equiv (g_2(x)(h_1(x)h_2(x)+x)) \mod f(x)$$
		
		\RETURN $h(x)$\\
	\end{algorithmic}
\end{algorithm}

 We form the curve $N$ with an irreducible polynomial $f(x)$ of degree $d$ over $\mathbb F_q$. Any polynomial $h(x)$ of degree less than $d$ with $\gcd(f(x),x-h^2(x))=1$ represents a unique element in 
Jac($N$). For two elements $D_1,D_2\in $ Jac($N$) represented by polynomials $h_1(x)$ and $h_2(x)$ respectively, we defined an addition operation involving only univariate polynomial arithmetics. The algorithm returns a polynomial $h(x)\in\mathbb F_q[x]$ which uniquely represents $D=D_1+D_2$. We also note that almost all polynomials $h(x)$ of degree less than $d$ represents an element in Jac($N$) and this allows one easily select  a random element $D$ in Jac($N$). The single polynomial representation does not only give us a liberty to select any polynomial, it also provides an efficient group operation in the Jacobian group. The following table compares this group operation with regular Cantor's algorithm. According to the results in this table, the single representation of Jacobian elements have advantages over polynomial pairs representation.  The time is measured while computing a $pQ$ where $Q$ is an element in the Jacobian group of $N$ and $p=4294967311$. The curve is over the field $\mathbb F_p$. 
\\

\begin{tabular}{ |p{4cm}|p{3cm}|p{3.5cm}|  }

	\hline
	{\bf Degree of the $f(x)$ when $N:y^2=xf^2(x)$}& {\bf Nodal Curves (Second)} &{\bf Cantor's Algorithm (Second)}\\
	\hline
	5   & 0.003    &0.023\\
	\hline
	11&   0.01  & 0.081  \\
	\hline
	23 &0.019 & 0.357\\
	\hline
	47    &0.06 & 2.15\\
	\hline
	53&   0.068 & 2.98\\
	\hline
	63& 0.089 & 4.96   \\
	\hline
	71& 0.12  & 6.95\\
	\hline
	83   & 0.15    &10.98\\
	\hline
	95&   0.19  & 16.67  \\
	\hline
	110 &0.24 & 26.36\\
	\hline
	130   &0.33 & 45.46\\
	\hline
	145&   0.41  & 64.9\\
	\hline
	150& 0.43  & 72.3   \\
	\hline
	165& 0.52  & 100.39\\
	\hline
	193 & 0.69 &167.29\\
	\hline
	
\end{tabular}
\\
\\
The tests were run on a Linux OS computer with 8 GB RAM and a Intel Core i7- 5600 2.6 GHz processor. We use the programming language of C++ with a PARI/GP library\cite{Pari}. The following chart illustrates the results in the table.  
\newpage
\begin{figure}
	\includegraphics[width=\linewidth]{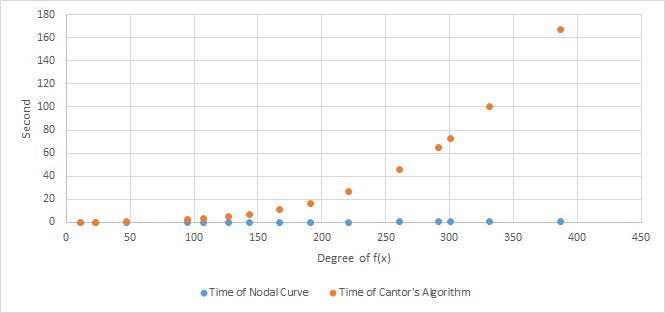}
	\caption{Nodal Curves vs Cantor's Algorithm}
	\label{fig:fig1}
\end{figure}

 \bibliographystyle{amsplain}

\end{document}